\theoremstyle{plain}
\newtheorem{theorem}{Theorem}[section]
\newtheorem{lemma}[theorem]{Lemma}
\theoremstyle{definition}
\newtheorem{remark}[theorem]{Remark}
\newtheorem*{Acknowledgement}{Acknowledgement}
\begin{document}
\def\sect#1{\section*{\leftline{\large\bf #1}}}
\def\th#1{\noindent{\bf #1}\bgroup\it}
\def\endth{\egroup\par}
\def\minf{\textnormal{minf}}
\def\spn{\textnormal{span}}
\def\N{\mathbb{N}}
\def\Z{\mathbb{Z}}
\def\R{\mathbb{R}}
\def\Q{\mathbb{Q}}
\def\w{\omega}
\def\K{\mathbb{K}}
\def\L{\mathcal{L}}
\def\D{\mathcal{D}}
\def\C{\mathcal{C}}

\title[The Hardy-Littlewood Maximal Operator on Discrete Morrey Spaces]
{The Hardy-Littlewood Maximal Operator on Discrete Morrey Spaces}
\author{Hendra Gunawan}
\address{Department of Mathematics, Bandung Institute of Technology, Bandung 40132,
Indonesia}
\email{hgunawan@math.itb.ac.id}
\author{Christopher Schwanke}
\address{Unit for BMI, North-West University, Private Bag X6001, Potchefstroom, 2520,
South Africa}
\email{schwankc326@gmail.com}
\date{\today}
\subjclass[2010]{42B35, 46B45, 46A45}
\keywords{Discrete Morrey spaces, Hardy-Littlewood maximal operator, Riesz potential}

\begin{abstract}
We discuss the Hardy-Littlewood maximal operator on discrete Morrey spaces of arbitrary
dimension. In particular, we obtain its boundedness on the discrete Morrey spaces using
a discrete version of the Fefferman-Stein inequality. As a corollary, we also obtain the
boundedness of some Riesz potentials on discrete Morrey spaces.
\end{abstract}

\maketitle

\section{Introduction}\label{S:intro}

While the Hardy-Littlewood maximal operator is well-known, discrete Morrey spaces were
only studied recently in \cite{GKS} (see also \cite{Ber} for related works). In this paper,
we investigate the boundedness of the (discrete) Hardy-Littlewood maximal
operator on discrete Morrey spaces of arbitrary dimension. Some important properties of this
operator (and many others) on the $\ell^p(\Z^d)$ spaces were discussed in \cite{Pierce}.
See also \cite{MSW, SW0,SW1,SW2,SW3} for related works on discrete analogues in harmonic
analysis.

The boundedness of the (continuous) Hardy-Littlewood maximal operator on the (continuous)
Morrey spaces was first studied in \cite{CF}, whose results were later extended in
\cite{Miz,Nak} to some generalizations of Morrey spaces. The driving force behind the
results in \cite{CF,Miz,Nak} is the so-called Fefferman-Stein inequality \cite[Lemma 1]{FS},
a result specifically regarding integrable functions defined on $\R^d$. We illustrate in
Theorem~\ref{T:FeffStein} how this inequality, despite its reliance on various tools only
available in the continuous setting, can be transformed into a natural discrete analogue.
As a consequence of Theorem~\ref{T:FeffStein}, we obtain the boundedness of the discrete
Hardy-Littlewood maximal operator on the discrete Morrey spaces in Theorem~\ref{T:Misbdd}.

We begin with some notation and definitions. First we set $\w := \N\cup\{0\}$ and use this
notation throughout the paper. For $m=(m_1,\dots,m_d)\in\Z^d$ and $N\in\w$ define
\[
S_{m,N} := \{k\in\Z^d: \|k-m\|_{\infty}\leq N\},
\]
where as usual $\|(x_1,\dots,x_d)\|_{\infty} := \max\{|x_i|:1\leq i\leq d\}$ for $(x_1,\dots,x_d)
\in\R^d$. Again following standard conventions, we denote the cardinality of a set $S$ by $|S|$.
Then we have $|S_{m,N}|=(2N+1)^d$ for all $m\in\Z^d$ and each $N\in\w$. Given $1\leq p\leq q
<\infty$ we define the discrete Morrey space $\ell^p_q(\Z^d)$ to
be the space of all functions $x\colon\Z^d\to\R$ for which
\[
\|x\|_{\ell^p_q(\Z^d)} := \underset{m\in\Z^d,N\in\w}{\sup}|S_{m,N}|^{\frac{1}{q}-\frac{1}{p}}
\biggl(\sum_{k\in S_{m,N}}|x(k)|^p\biggr)^{1/p}<\infty.
\]
By following the proof of \cite[Proposition 2.2]{GKS}, one can readily prove that
$\|\cdot\|_{\ell^p_q(\Z^d)}$ defines a norm on $\ell^p_q(\Z^d)$ and that $\ell^p_q(\Z^d)$
is a Banach space with respect to this norm. Indeed, \cite[Proposition 2.2]{GKS} proves
the given result for $d=1$, and its proof is easily adaptable to higher dimensions.

We wish to study the (discrete) Hardy-Littlewood maximal operator on these discrete Morrey
spaces of arbitrary dimension. To begin, define the discrete Hardy-Littlewood maximal
operator (or for emphasis, the ``odd'' discrete Hardy-Littlewood maximal operator) $M$ by
\[
Mx(m) := \underset{N\in\w}{\sup}\,\frac{1}{|S_{m,N}|}\sum_{k\in S_{m,N}}|x(k)|\quad
\big(x\in\R^{\Z^d},\ m\in\Z^d\big).
\]
The operator $M$ is a discrete analogue of the ``centered continuous'' Hardy-Littlewood
maximal operator, which is defined by
\[
\bar{M}f(y) := \underset{r>0}{\sup}\,\frac{1}{(2r)^d}\int_{Q_{y,r}}|f(z)|dz\quad
(f\in L_{loc}^1(\R^d),\ y\in\R^d),
\]
where $Q_{y,r} := \{t\in\R^d: \|t-y\|_\infty\leq r\}$. While the ``odd" discrete Hardy-Littlewood
maximal operator will be our primary interest, the following rendition of this function will
prove useful in obtaining a discrete analogue of the Fefferman-Stein inequality.

The ``even'' discrete Hardy-Littlewood maximal operator $\hat{M}$ is defined for
$x\in\R^{\Z^d}$ and $m=(m_1,\dots,m_d)\in\Z^d$ by
\[
\hat{M}x(m) := \underset{N\in\N}{\sup}\,\frac{1}{|R_{m,N}|}\sum_{k\in R_{m,N}}|x(k)|,
\]
where
\[
R_{m,N}:=S_{m,N}\setminus\{(k_1,\dots,k_d)\in\Z^d:k_i=m_i+N\ \text{for some}\ 1\leq i\leq d\}
\quad (m\in\Z^d,\ N\in\N),
\]
so that $|R_{m,N}|=(2N)^d$. Additionally, we define the ``uncentered'' discrete Hardy-Littlewood
maximal operator $\tilde{M}$ by
\[
\tilde{M}x(m) := \underset{S\ni m}{\sup}\,\frac{1}{|S|}\sum_{k\in S}|x(k)|\quad (x\in\R^{\Z^d},
\ m\in\Z^d),
\]
where the supremum above is taken over all sets of the form $S=S_{k,N}$, for some $k\in\Z^d$
and $N\in\w$, that contain $m$.

We say that two operators $T_1,T_2\colon \R^{\Z^d}\to\R^{\Z^d}$ are \textit{equivalent} if there exist
$C_1,C_2>0$ such that $C_1 T_1x(k)\leq T_2x(k) \leq C_2 T_1x(k)$ hold for all $x\in\R^{\Z^d}$ and
every $k\in\Z^d$. Regarding the operators $M, \hat{M}$, and $\tilde{M}$, we have the following lemma
which will be useful in our discussion in the next sections. We leave its proof to the reader.

\begin{lemma}\label{L:equiv}
The operators $M, \hat{M}$, and $\tilde{M}$ are pairwise equivalent.
\end{lemma}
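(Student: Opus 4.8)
The idea is that cubes of the three relevant types --- the centered cubes $S_{m,N}$, the ``even'' cubes $R_{m,N}$, and the arbitrary cubes $S_{k,N}$ containing $m$ --- can always be trapped between cubes of the other types whose cardinalities differ by only a dimensional factor. Hence each of the three averaging operations is dominated by the others up to a multiplicative constant depending on $d$ alone. Since the relation of equivalence between operators on $\R^{\Z^d}$ is plainly symmetric (invert the constants) and transitive (multiply the constants), it suffices to prove $M\sim\tilde M$ and $M\sim\hat M$; the equivalence $\hat M\sim\tilde M$ is then automatic.

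For $M\sim\tilde M$, the inequality $Mx(m)\le\tilde Mx(m)$ is immediate, because every centered cube $S_{m,N}$ is among the cubes over which the supremum defining $\tilde Mx(m)$ ranges. For the reverse direction, suppose $S_{k,N}\ni m$, so $\|k-m\|_\infty\le N$; by the triangle inequality for $\|\cdot\|_\infty$ we get $S_{k,N}\subseteq S_{m,2N}$, and therefore $\sum_{j\in S_{k,N}}|x(j)|\le\sum_{j\in S_{m,2N}}|x(j)|\le|S_{m,2N}|\,Mx(m)$. Consequently
\[
\frac{1}{|S_{k,N}|}\sum_{j\in S_{k,N}}|x(j)|\le\frac{|S_{m,2N}|}{|S_{k,N}|}\,Mx(m)=\Bigl(\tfrac{4N+1}{2N+1}\Bigr)^d Mx(m)\le 2^d\,Mx(m),
\]
and taking the supremum over all admissible cubes $S_{k,N}$ yields $\tilde Mx(m)\le 2^d\,Mx(m)$.

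For $M\sim\hat M$, observe first that $R_{m,N}\subseteq S_{m,N}$, so for every $N\in\N$
\[
\frac{1}{|R_{m,N}|}\sum_{k\in R_{m,N}}|x(k)|\le\frac{|S_{m,N}|}{|R_{m,N}|}\,Mx(m)=\Bigl(\tfrac{2N+1}{2N}\Bigr)^d Mx(m)\le\Bigl(\tfrac32\Bigr)^d Mx(m),
\]
whence $\hat Mx(m)\le(3/2)^d Mx(m)$. Conversely, I claim $S_{m,N}\subseteq R_{m,N+1}$ for each $N\in\w$: if $\|k-m\|_\infty\le N$, then $k\in S_{m,N+1}$ and no coordinate of $k$ equals $m_i+(N+1)$, so $k$ survives the deletion defining $R_{m,N+1}$. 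Hence
\[
\frac{1}{|S_{m,N}|}\sum_{k\in S_{m,N}}|x(k)|\le\frac{|R_{m,N+1}|}{|S_{m,N}|}\,\hat Mx(m)=\Bigl(\tfrac{2N+2}{2N+1}\Bigr)^d\hat Mx(m)\le 2^d\,\hat Mx(m),
\]
and taking the supremum over $N\in\w$ gives $Mx(m)\le 2^d\,\hat Mx(m)$. Combining the two equivalences by transitivity shows that $M$, $\hat M$, and $\tilde M$ are pairwise equivalent.

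The argument is entirely elementary; the only point requiring a little care --- and thus the closest thing to an obstacle --- is the asymmetry of the ``even'' cubes $R_{m,N}$, obtained from $S_{m,N}$ by deleting the $d$ ``upper'' faces $\{k_i=m_i+N\}$. One must check both that the inclusion $S_{m,N}\subseteq R_{m,N+1}$ is genuine (the deleted faces of $R_{m,N+1}$ sit at $\infty$-distance exactly $N+1$ from $m$, hence outside $S_{m,N}$) and that the cardinalities $|R_{m,N}|=(2N)^d$ and $|S_{m,N}|=(2N+1)^d$ are used consistently. Everything else reduces to set inclusions and counting.
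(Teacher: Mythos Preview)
Your proof is correct and follows essentially the same route as the paper's: both arguments reduce to the elementary set inclusions $S_{k,N}\subseteq S_{m,2N}$ (for $m\in S_{k,N}$), $R_{m,N}\subseteq S_{m,N}$, and an inclusion of $S_{m,N}$ into a slightly larger even cube, together with the corresponding cardinality ratios. Your choice of the inclusion $S_{m,N}\subseteq R_{m,N+1}$ is a little sharper than the paper's $S_{m,N}\subseteq R_{m,2N}$ and yields marginally better constants, but the underlying idea is identical.
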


\section{The Discrete Fefferman-Stein Inequality}\label{S:FS}

In this section, we provide a discrete version of the Fefferman-Stein inequality (\cite[Lemma~1]{FS}).
Theorem~\ref{T:FeffStein} below will be used to obtain the boundedness of the discrete Hardy-Littlewood
maximal operator on discrete Morrey spaces in Section 3.

We call a function $x\in\R^{\Z^d}$ \textit{positive} if $x(k)\geq 0$ for each $k\in\Z^d$.
Given $A\subseteq\R^d$, we denote the characteristic function of $A$ by $\chi_A$.

\begin{theorem}\label{T:FeffStein}
Let $1<p<\infty$. There exists $K>0$ such that for all $x\in\R^{\Z^d}$ and each positive $\phi\in\R^{\Z^d}$,
\begin{itemize}
\item[(1)] $\sum\limits_{k\in\Z^d}\bigl(Mx(k)\bigr)^p\phi(k)\leq
K\sum\limits_{k\in\Z^d}|x(k)|^{p}M\phi(k)$,
\item[(2)] $\sum\limits_{k\in\Z^d}\bigl(\hat{M}x(k)\bigr)^p\phi(k)\leq
K\sum\limits_{k\in\Z^d}|x(k)|^{p}\hat{M}\phi(k)$, and
\item[(3)] $\sum\limits_{k\in\Z^d}\bigl(\tilde{M}x(k)\bigr)^p\phi(k)\leq
K\sum\limits_{k\in\Z^d}|x(k)|^{p}\tilde{M}\phi(k)$.
\end{itemize}
\end{theorem}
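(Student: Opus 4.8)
The plan is to derive all three inequalities from the single statement (3) for $\tilde M$. Indeed, Lemma~\ref{L:equiv} supplies constants $c_1,c_2>0$ with $c_1\,Mx(k)\le\tilde Mx(k)\le c_2\,Mx(k)$ for all $x\in\R^{\Z^d}$ and $k\in\Z^d$ (and analogous bounds relating $\hat M$ and $\tilde M$); substituting these into (3) yields (1) and (2) with $K$ replaced by a $p$-dependent multiple of itself, and taking the largest of the three resulting constants furnishes a single $K$ valid for all three. So it suffices to prove (3).

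The heart of the matter is a discrete weak-type $(1,1)$ Fefferman--Stein inequality: for every $x\in\R^{\Z^d}$, every positive $\phi\in\R^{\Z^d}$, and every $\lambda>0$,
\[
\sum_{k\in E_\lambda}\phi(k)\le\frac{3^d}{\lambda}\sum_{j\in\Z^d}|x(j)|\,\tilde M\phi(j),\qquad E_\lambda:=\{k\in\Z^d:\tilde Mx(k)>\lambda\}.
\]
It is enough to bound $\sum_{k\in F}\phi(k)$ for an arbitrary finite $F\subseteq E_\lambda$ and then take the supremum over $F$. For each $k\in F$ fix a cube $\Gamma_k=S_{m_k,N_k}\ni k$ with $|\Gamma_k|^{-1}\sum_{j\in\Gamma_k}|x(j)|>\lambda$, and apply a discrete Vitali selection to the finite family $\{\Gamma_k:k\in F\}$: repeatedly pick a cube of maximal cardinality among those not yet discarded and discard all cubes meeting it. This yields pairwise disjoint cubes $\Gamma^{(1)},\dots,\Gamma^{(n)}$, say $\Gamma^{(i)}=S_{\mu_i,n_i}$, such that each $\Gamma_k$ meets some $\Gamma^{(i)}$ with $n_i\ge N_k$, whence $\Gamma_k\subseteq S_{\mu_i,3n_i}=:\widehat\Gamma^{(i)}$; in particular $F\subseteq\bigcup_{i=1}^n\widehat\Gamma^{(i)}$ and $|\widehat\Gamma^{(i)}|\le 3^d|\Gamma^{(i)}|$. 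Since $\widehat\Gamma^{(i)}$ is again an admissible cube and contains every $j\in\Gamma^{(i)}$, we have $\tilde M\phi(j)\ge|\widehat\Gamma^{(i)}|^{-1}\sum_{l\in\widehat\Gamma^{(i)}}\phi(l)$ for each such $j$, so
\[
\sum_{l\in\widehat\Gamma^{(i)}}\phi(l)\le|\widehat\Gamma^{(i)}|\min_{j\in\Gamma^{(i)}}\tilde M\phi(j)\le 3^d|\Gamma^{(i)}|\min_{j\in\Gamma^{(i)}}\tilde M\phi(j)\le\frac{3^d}{\lambda}\sum_{j\in\Gamma^{(i)}}|x(j)|\,\tilde M\phi(j),
\]
using $|\Gamma^{(i)}|<\lambda^{-1}\sum_{j\in\Gamma^{(i)}}|x(j)|$ together with $\bigl(\sum_j a_j\bigr)\min_j b_j\le\sum_j a_jb_j$ for nonnegative $a_j,b_j$. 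Summing over $i$ and invoking disjointness of the $\Gamma^{(i)}$ gives $\sum_{k\in F}\phi(k)\le\sum_i\sum_{l\in\widehat\Gamma^{(i)}}\phi(l)\le\frac{3^d}{\lambda}\sum_{j\in\Z^d}|x(j)|\,\tilde M\phi(j)$, as claimed.

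To pass from weak type $(1,1)$ to (3), write, via the layer-cake formula and Tonelli's theorem (all terms being nonnegative),
\[
\sum_{k\in\Z^d}\bigl(\tilde Mx(k)\bigr)^p\phi(k)=p\int_0^\infty\lambda^{p-1}\sum_{k\in E_\lambda}\phi(k)\,d\lambda.
\]
Fix $\lambda>0$ and split $x=x\chi_{\{|x|>\lambda/2\}}+x\chi_{\{|x|\le\lambda/2\}}$. The second summand has sup-norm at most $\lambda/2$, so sublinearity of $\tilde M$ forces $E_\lambda\subseteq\{k:\tilde M(x\chi_{\{|x|>\lambda/2\}})(k)>\lambda/2\}$; applying the weak-type inequality to $x\chi_{\{|x|>\lambda/2\}}$ bounds $\sum_{k\in E_\lambda}\phi(k)$ by $\frac{2\cdot 3^d}{\lambda}\sum_{j:\,|x(j)|>\lambda/2}|x(j)|\,\tilde M\phi(j)$. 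Inserting this, interchanging sum and integral, and evaluating $\int_0^{2|x(j)|}\lambda^{p-2}\,d\lambda=\frac{(2|x(j)|)^{p-1}}{p-1}$ (finite precisely because $p>1$) yields (3) with $K=\frac{2^p3^dp}{p-1}$, and hence (1) and (2) after adjusting the constant as above. The main obstacle is the weak-type step: one must arrange the discrete Vitali selection so that the enlarged sets $\widehat\Gamma^{(i)}$ are themselves cubes of the admissible form $S_{\mu,n}$ (so they are legitimate competitors in the supremum defining $\tilde M\phi$) while keeping $|\widehat\Gamma^{(i)}|/|\Gamma^{(i)}|$ bounded; everything else is routine bookkeeping. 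It is here that the uncentered operator $\tilde M$ is convenient, since $\widehat\Gamma^{(i)}$ need only contain the points of $\Gamma^{(i)}$ rather than be centered at them, and Lemma~\ref{L:equiv} then transfers the estimate to $M$ and $\hat M$.
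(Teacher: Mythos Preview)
Your proof is correct and takes a genuinely different route from the paper's. The paper proves (2) by embedding $\R^{\Z^d}$ into $L^1_{\mathrm{loc}}(\R^d)$ via step functions $\bar a=\sum_k a(k)\chi_{\C_k}$, comparing $\hat M x$ with the continuous maximal function $\bar M\bar x$ (up to dimensional constants), invoking the continuous Fefferman--Stein inequality \cite[Lemma~1]{FS} as a black box, and then transferring back; (1) and (3) then follow from Lemma~\ref{L:equiv}. You instead reprove Fefferman--Stein directly in the discrete setting: a finite Vitali selection on the cubes $S_{m,N}$ gives the weighted weak-type $(1,1)$ bound for $\tilde M$, and the standard layer-cake / good-$\lambda$ splitting upgrades this to the strong-type inequality (3), with (1) and (2) again coming from Lemma~\ref{L:equiv}. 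Your argument is fully self-contained and yields the explicit constant $K=2^p3^dp/(p-1)$, at the price of redoing the covering argument; the paper's approach is shorter once the continuous result is granted but requires the somewhat fiddly two-way comparison between discrete and continuous averages. Your observation that the \emph{uncentered} operator is the natural one for the Vitali step---because the tripled cube $S_{\mu_i,3n_i}$ need only contain the points of $\Gamma^{(i)}$, not be centered at each of them---is exactly the right structural point, and Lemma~\ref{L:equiv} then does the work of passing to $M$ and $\hat M$.
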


\begin{proof}
We prove statement (2), from which statements (1) and (3) will follow from Lemma~\ref{L:equiv}.
For each $k=(k_1,...,k_d)\in\Z^d$ define the $d$-dimensional cube of volume one by
\[
\C_k := \{(y_1,...,y_d)\in\R^d:k_i\leq y_i<k_i+1\ \text{for all}\ 1\leq i\leq d\}.
\]
Next note that for every $a\in\R^{\Z^d}$ the function $\bar{a}$ defined by
\[
\bar{a}(t) := \sum_{k\in\Z^d}a(k)\chi_{\C_k}(t)\quad (t\in\R^d)
\]
is a member of $L_{loc}^1(\R^d)$ and
\[
\sum_{k\in R_{m,N}}a(k)=\int_{Q_{m,N}}\bar{a}(t)dt\quad (m\in\Z^d,\ N\in\w),
\]
where again $Q_{m,N}=\{t\in\R^d:\|t-m\|_\infty\leq N\}$. Therefore, we have
\[
\sum_{k\in\Z^d}a(k)=\int_{\R^d}\bar{a}(t)dt.
\]
Next let $x,\phi\in\R^{\Z^d}$, and suppose $\phi$ is positive. Then
$\bar{\phi}(t)\geq 0$ for every $t\in\R^d$, and in light of the remarks above,
\[
\sum_{k\in\Z^d}\bigl(\hat{M}x(k)\bigr)^p\phi(k)=\int_{\R^d}\sum_{k\in\Z^d}
\biggl(\underset{N\in\N}{\sup}\frac{1}{(2N)^d}\sum_{i\in R_{k,N}}|x(i)|\biggr)^p
\phi(k)\chi_{\C_k}(t)dt.
\]
Furthermore,
\begin{align*}
\int_{\R^d}&\sum_{k\in\Z^d}\biggl(\underset{N\in\N}{\sup}\frac{1}{(2N)^d}
\sum_{i\in R_{k,N}}|x(i)|\biggr)^p\phi(k)\chi_{\C_k}(t)dt\\
&=\sum_{k\in\Z^d}\int_{\C_k}\biggl(\underset{N\in\N}{\sup}\frac{1}{(2N)^d}
\int_{Q_{k,N}}|\bar{x}(s)|ds\biggr)^p\phi(k)dt.
\end{align*}
Note that for each $k\in\Z^d$ and all $t\in \C_k$ we have
\[
\int_{Q_{k,N}}|\bar{x}(s)|ds\leq\int_{Q_{t,N+1}}|\bar{x}(s)|ds.
\]
Hence
\begin{align*}
\sum_{k\in\Z^d}&\int_{\C_k}\biggl(\underset{N\in\N}{\sup}\frac{1}{(2N)^d}
\int_{Q_{k,N}}|\bar{x}(s)|ds\biggr)^p\phi(k)dt\\
&\leq 2^d\sum_{k\in\Z^d}\int_{\C_k}\biggl(\underset{N\in\N}{\sup}\frac{1}{(2(N+1))^d}
\int_{Q_{t,N+1}}|\bar{x}(s)|ds\biggr)^p\phi(k)dt\\
&\leq 2^d\int_{\R^d}\bigl(\bar{M}\bar{x}(t)\bigr)^p\bar{\phi}(t)dt.
\end{align*}
By the Fefferman-Stein inequality \cite[Lemma 1]{FS}, there exists $K>0$ such that
\[
\int_{\R^d}\bigl(\bar{M}\bar{x}(t)\bigr)^p\bar{\phi}(t)dt\leq
K\int_{\R^d}\left|\bar{x}(t)\right|^p\bar{M}\bar{\phi}(t)dt.
\]
Thus we obtain
\begin{align*}
\sum_{k\in\Z^d}\bigl(\hat{M}x(k)\bigr)^p\phi(k)&\leq 2^dK\int_{\R^d}\left|\bar{x}(t)\right|^p
\bar{M}\bar{\phi}(t)dt\\
&=2^dK\sum_{k\in\Z^d}\int_{\C_k}|x(k)|^p\left(\underset{r>0}{\sup}\frac{1}{(2r)^d}\int_{Q_{t,r}}\bar{\phi}(s)ds\right)dt.
\end{align*}
For each $k\in\Z^d$, let $k^\ast$ denote the midpoint of $\C_k$. Notice that for each $k\in\Z^d$ we have
\[
\int_{\C_k}|x(k)|^p\left(\underset{r>0}{\sup}\frac{1}{(2r)^d}\int_{Q_{t,r}}\bar{\phi}(s)ds\right)dt
\leq\int_{\C_k}|x(k)|^p\left(\underset{r>0}{\sup}\frac{1}{(2r)^d}\int_{Q_{k^\ast,r}}\bar{\phi}(s)ds\right)dt.
\]
Next suppose that $0<r<\frac{1}{2}$. Then for all $k\in\Z^d$,
\[
\frac{1}{(2r)^d}\int_{Q_{k^\ast,r}}\bar{\phi}(s)ds=\frac{1}{(2r)^d}\phi(k)r^d=
\frac{1}{2^d}\phi(k)\leq\frac{1}{(2\cdot\frac{1}{2})^d}\int_{Q_{k^\ast,\frac{1}{2}}}\bar{\phi}(s)ds.
\]
Hence
\begin{align*}
\sum_{k\in\Z^d}&\int_{\C_k}|x(k)|^p\left(\underset{r>0}{\sup}\frac{1}{(2r)^d}
\int_{Q_{t,r}}\bar{\phi}(s)ds\right)dt\\
&\leq\sum_{k\in\Z^d}|x(k)|^p\left(\underset{r\geq\frac{1}{2}}{\sup}\frac{1}{(2r)^d}
\int_{Q_{k^\ast,r}}\bar{\phi}(s)ds\right)\\
&\leq\sum_{k\in\Z^d}|x(k)|^p\left(\underset{r\geq\frac{1}{2}}{\sup}\frac{1}{(2r)^d}
\int_{Q_{k,\lfloor r+2\rfloor}}\bar{\phi}(s)ds\right)\\
&\leq 5^d\sum_{k\in\Z^d}|x(k)|^p\left(\underset{r\geq\frac{1}{2}}{\sup}\frac{1}{(2\lfloor r+2\rfloor)^d}
\int_{Q_{k,\lfloor r+2\rfloor}}\bar{\phi}(s)ds\right)\\
&\leq 5^d\sum_{k\in\Z^d}|x(k)|^p\left(\underset{N\in\N}{\sup}\frac{1}{(2N)^d}\sum_{i\in R_{k,N}}\phi(i)\right)\\
&=5^d\sum_{k\in\Z^d}|x(k)|^p\hat{M}\phi(k).
\end{align*}
Therefore,
\[
\sum_{k\in\Z^d}\bigl(\hat{M}x(k)\bigr)^p\phi(k)\leq (10)^dK\sum_{k\in\Z^d}|x(k)|^{p}\hat{M}\phi(k),
\]
as desired.
\end{proof}

\section{The Boundedness of the Discrete Maximal Operator}\label{S:bddnessofops}

We use the methods of F. Chiarenza and M. Frasca in \cite{CF} in this section, and we additionally require
the following lemma.

\begin{lemma}\label{bddness-M}
Let $1\leq p\leq q<\infty$. For any $x\in\ell^p_q(\Z^d)$ we have $Mx\in\ell^\infty(\Z^d)$ and $\|Mx\|_{\ell^\infty(\Z^d)}\leq\|x\|_{\ell_q^p(\Z^d)}$.
\end{lemma}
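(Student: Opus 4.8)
The plan is to estimate $Mx(m)$ directly for an arbitrary $m \in \Z^d$ by unwinding the definition of the maximal operator and comparing each averaging sum against the Morrey norm of $x$. Fix $m \in \Z^d$ and $N \in \w$. The key observation is that the ball $S_{m,N}$ that appears in the definition of $Mx(m)$ is exactly one of the sets over which the supremum defining $\|x\|_{\ell^p_q(\Z^d)}$ is taken, so that
\[
\biggl(\sum_{k\in S_{m,N}}|x(k)|^p\biggr)^{1/p} \leq |S_{m,N}|^{\frac{1}{p}-\frac{1}{q}}\,\|x\|_{\ell^p_q(\Z^d)}.
\]

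Next I would bound the $\ell^1$ average by the $\ell^p$ average using Hölder's inequality on the finite set $S_{m,N}$: with exponents $p$ and $p'=p/(p-1)$,
\[
\sum_{k\in S_{m,N}}|x(k)| \leq |S_{m,N}|^{1/p'}\biggl(\sum_{k\in S_{m,N}}|x(k)|^p\biggr)^{1/p} = |S_{m,N}|^{1-\frac{1}{p}}\biggl(\sum_{k\in S_{m,N}}|x(k)|^p\biggr)^{1/p}.
\]
(When $p=1$ this step is trivial and $p'=\infty$ is handled by the convention $|S_{m,N}|^{0}=1$.) Combining the two displays,
\[
\frac{1}{|S_{m,N}|}\sum_{k\in S_{m,N}}|x(k)| \leq |S_{m,N}|^{-\frac{1}{p}}\cdot|S_{m,N}|^{\frac{1}{p}-\frac{1}{q}}\,\|x\|_{\ell^p_q(\Z^d)} = |S_{m,N}|^{-\frac{1}{q}}\,\|x\|_{\ell^p_q(\Z^d)} \leq \|x\|_{\ell^p_q(\Z^d)},
\]
where the last inequality uses $|S_{m,N}| = (2N+1)^d \geq 1$ and $\frac{1}{q} > 0$. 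Since this bound is uniform in $N$, taking the supremum over $N \in \w$ gives $Mx(m) \leq \|x\|_{\ell^p_q(\Z^d)}$, and since $m$ was arbitrary we conclude $Mx \in \ell^\infty(\Z^d)$ with $\|Mx\|_{\ell^\infty(\Z^d)} \leq \|x\|_{\ell^p_q(\Z^d)}$.

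There is no real obstacle here; the proof is a short two-line application of Hölder's inequality together with the defining inequality of the Morrey norm. The only point requiring a moment's care is the bookkeeping of exponents — making sure the powers of $|S_{m,N}|$ cancel to leave exactly $|S_{m,N}|^{-1/q}$ — and the degenerate case $p=1$, where Hölder is vacuous and the estimate reduces immediately to the definition of $\|x\|_{\ell^1_q(\Z^d)}$.
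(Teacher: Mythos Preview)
Your proof is correct and follows essentially the same route as the paper: both arguments bound the $\ell^1$ average over $S_{m,N}$ by the $\ell^p$ average (the paper cites \cite[Lemma~2.3]{GKS} for this step, which is precisely your H\"older inequality), and then compare with the Morrey norm using $(2N+1)^{-d/p}\leq(2N+1)^{d/q-d/p}$, which is equivalent to your observation that $|S_{m,N}|^{-1/q}\leq 1$. The only cosmetic difference is that the paper first passes to the supremum over all centers $m$ before estimating, a step you rightly recognize as unnecessary.
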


\begin{proof}
Let $x\in\ell^p_q(\Z^d)$, and put $m^\ast \in\Z^d$. Then
\[
Mx(m^\ast)=\underset{N\in\w}{\sup}\frac{1}{(2N+1)^d}\sum_{k\in S_{m^\ast,N}}|x(k)|
\leq\underset{m\in\Z^d,N\in\w}{\sup}\frac{1}{(2N+1)^d}\sum_{k\in S_{m,N}}|x(k)|.
\]
In \cite[Lemma 2.3]{GKS}, it is shown for $d=1$ that for any $m\in\Z^d$ and $N\in\w$,
\[
\frac{1}{(2N+1)^d}\sum_{k\in S_{m,N}}|x(k)| \leq \biggl(\frac{1}{(2N+1)^d}
\sum_{k\in S_{m,N}}|x(k)|^p\biggr)^{\frac{1}{p}}.
\]
One may readily check that the proof of \cite[Lemma 2.3]{GKS} also holds for general $d\in\N$. Hence
\begin{align*}
\underset{m\in\Z^d,N\in\w}{\sup}\frac{1}{(2N+1)^d}\sum_{k\in S_{m,N}}|x(k)|
&\leq \underset{m\in\Z^d,N\in\w}{\sup}(2N+1)^{-\frac{d}{p}}\biggl(\sum_{k\in S_{m,N}}
|x(k)|^p\biggr)^{\frac{1}{p}}\\
&\leq \underset{m\in\Z^d,N\in\w}{\sup}(2N+1)^{\frac{d}{q}-\frac{d}{p}}\biggl(
\sum_{k\in S_{m,N}}|x(k)|^p\biggr)^{\frac{1}{p}}\\
&= \|x\|_{\ell_q^p},
\end{align*}
which proves the lemma.
\end{proof}

We next present the main result of this paper.

\begin{theorem}\label{T:Misbdd}
Let $1<p\leq q<\infty$. For all $x\in\ell_{q}^{p}(\Z^d)$ we have $Mx\in\ell_{q}^{p}(\Z^d)$, and there exists $C>0$ such that $\|Mx\|_{\ell_{q}^{p}(\Z^d)}
\leq C\|x\|_{\ell^{p}_{q}(\Z^d)}$ holds for all $x\in\ell^p_q(\Z^d)$.
\end{theorem}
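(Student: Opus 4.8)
The plan is to adapt the classical argument of F.~Chiarenza and M.~Frasca from \cite{CF}. The starting point is to extract from Theorem~\ref{T:FeffStein}(1), applied with the constant weight $\phi\equiv 1$ (for which $M\phi\equiv 1$), the ordinary $\ell^p$-boundedness of $M$, namely $\|My\|_{\ell^p(\Z^d)}\le K^{1/p}\|y\|_{\ell^p(\Z^d)}$ for every $y\in\R^{\Z^d}$. Moreover, by Lemma~\ref{bddness-M} the contribution of the singleton cubes $S_{m,0}$ to $\|Mx\|_{\ell^p_q(\Z^d)}$ is precisely $\|Mx\|_{\ell^\infty(\Z^d)}\le\|x\|_{\ell^p_q(\Z^d)}$, so it remains to bound, uniformly over $m_0\in\Z^d$ and $N_0\in\N$, the quantity $|S_{m_0,N_0}|^{\frac1q-\frac1p}\bigl(\sum_{k\in S_{m_0,N_0}}(Mx(k))^p\bigr)^{1/p}$.

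Fix such $m_0$ and $N_0$, and split $x=x_1+x_2$ with $x_1:=x\chi_{S_{m_0,2N_0}}$ and $x_2:=x-x_1$. Since $M$ is subadditive and $\|\cdot\|_{\ell^p_q(\Z^d)}$ is monotone with respect to the pointwise order on nonnegative functions, it suffices to bound the contributions of $x_1$ and $x_2$ separately. For $x_1$, the $\ell^p$-boundedness of $M$ gives $\sum_{k\in S_{m_0,N_0}}(Mx_1(k))^p\le\sum_{k\in\Z^d}(Mx_1(k))^p\le K\sum_{k\in\Z^d}|x_1(k)|^p=K\sum_{k\in S_{m_0,2N_0}}|x(k)|^p$, and the Morrey condition on the cube $S_{m_0,2N_0}$, together with the elementary comparison $|S_{m_0,2N_0}|\le 2^d|S_{m_0,N_0}|$ and the fact that $\frac1q-\frac1p\le 0$, turns this into the bound $C\|x\|_{\ell^p_q(\Z^d)}$ once the prefactor $|S_{m_0,N_0}|^{\frac1q-\frac1p}$ is reinserted.

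The main work is the estimate for $x_2$. For $k\in S_{m_0,N_0}$ one has $S_{k,N}\subseteq S_{m_0,N+N_0}$, so $S_{k,N}$ can meet the support of $x_2$ only when $N\ge N_0+1$, in which case $N+N_0<2N$. For such $N$, bounding $\sum_{j\in S_{k,N}}|x_2(j)|$ by $\sum_{j\in S_{m_0,N+N_0}}|x(j)|$, applying H\"older's inequality and the Morrey condition on $S_{m_0,N+N_0}$, and then dividing by $|S_{k,N}|=(2N+1)^d$, produces a bound of the form $C(2N+1)^{-d/q}\|x\|_{\ell^p_q(\Z^d)}$; taking the supremum over $N\ge N_0+1$ (where $(2N+1)^{-d/q}$ is largest at the smallest admissible $N$) yields $Mx_2(k)\le C|S_{m_0,N_0}|^{-1/q}\|x\|_{\ell^p_q(\Z^d)}$ for every $k\in S_{m_0,N_0}$. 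Summing the $p$-th powers over the $|S_{m_0,N_0}|$ lattice points of the cube and reinserting the prefactor $|S_{m_0,N_0}|^{\frac1q-\frac1p}$ gives exactly $C\|x\|_{\ell^p_q(\Z^d)}$. Combining the $x_1$- and $x_2$-estimates and taking the supremum over $(m_0,N_0)$ yields $\|Mx\|_{\ell^p_q(\Z^d)}\le C\|x\|_{\ell^p_q(\Z^d)}$, and since each piece is finite we also obtain $Mx\in\ell^p_q(\Z^d)$. The chief obstacle is the bookkeeping in the $x_2$-estimate: one must track how the averaging factor $(2N+1)^{-d}$, the H\"older loss $|S_{m_0,N+N_0}|^{1-1/p}$, and the Morrey gain $|S_{m_0,N+N_0}|^{1/p-1/q}$ combine, check that the residual power of $N$ is negative (this is exactly where $p\le q$ is used, whereas $p>1$ enters only through the $\ell^p$-boundedness invoked for $x_1$), and confirm that the supremum in $N$ reproduces the power $|S_{m_0,N_0}|^{-1/q}$ that precisely cancels the Morrey prefactor.
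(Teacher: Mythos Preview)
Your proof is correct but follows a genuinely different route from the paper. The paper applies Theorem~\ref{T:FeffStein}(1) with the weight $\phi=\chi_{S_{m,N}}$, obtaining
\[
\sum_{k\in S_{m,N}}(Mx(k))^p\le K\sum_{k\in\Z^d}|x(k)|^p\,M\chi_{S_{m,N}}(k),
\]
and then decomposes the right-hand sum over the dyadic annuli $S_{m,2^{j+1}N}\setminus S_{m,2^jN}$, estimating $M\chi_{S_{m,N}}(k)$ on each annulus by an explicit geometric count and summing the resulting geometric series. You instead decompose $x$ rather than the domain: the near part $x_1=x\chi_{S_{m_0,2N_0}}$ is handled via the unweighted $\ell^p$-bound for $M$ (which, as you observe, is the $\phi\equiv 1$ case of Theorem~\ref{T:FeffStein}), and the far part $x_2$ is controlled by a direct pointwise Morrey estimate of $Mx_2$ on $S_{m_0,N_0}$. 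Your argument is arguably more elementary: it avoids the dyadic summation and the computation of $M\chi_{S_{m,N}}$, and it invokes the discrete Fefferman--Stein inequality only in its weakest form. The paper's approach, on the other hand, mirrors the original Chiarenza--Frasca proof more faithfully and makes the role of the weighted inequality explicit.

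One small inaccuracy in your closing commentary: in the $x_2$-estimate the combined exponent of $(2N+1)$ works out to $-d/q$, which is negative simply because $q<\infty$; the hypothesis $p\le q$ is not needed there. It is actually used in the $x_1$-part (and you do invoke it correctly there) when comparing $|S_{m_0,2N_0}|^{1/p-1/q}$ with $|S_{m_0,N_0}|^{1/p-1/q}$. This does not affect the validity of the proof.
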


\begin{proof}
Let $m\in\Z^d$, and put $N\in\N$ (the case $N=0$ will be handled later). By
Theorem~\ref{T:FeffStein}~(1) there exists $K>0$ such that
\[
\sum_{k\in\Z^d}(Mx(k))^p\chi_{S_{m,N}}(k)\leq K\sum_{k\in\Z^d}|x(k)|^{p}M\chi_{S_{m,N}}(k),
\]
and thus
\begin{align*}
\sum_{k\in S_{m,N}}(Mx(k))^p&\leq K\sum_{k\in\Z^d}|x(k)|^{p}M\chi_{S_{m,N}}(k)\\
&=K\sum_{k\in S_{m,2N}}|x(k)|^{p}M\chi_{S_{m,N}}(k)+K\sum_{j=1}^{\infty}
\sum_{k\in S_{m,2^{j+1}N}\setminus S_{m,2^{j}N}}|x(k)|^{p}M\chi_{S_{m,N}}(k).
\end{align*}
Next note that for every $k\in\Z^d$ we have
\[
M\chi_{S_{m,N}}(k)=\underset{t\in\w}{\sup}\frac{1}{(2t+1)^d}\sum_{i\in S_{k,t}}
\chi_{S_{m,N}}(i)=\underset{t\in\w}{\sup}\frac{1}{(2t+1)^d}|S_{k,t}\cap S_{m,N}|.
\]
Let $j\in\N$, and assume $k\in S_{m,2^{j+1}N}\setminus S_{m,2^jN}$.
Then $\|k-m\|_{\infty}-N>0$. Now observe that
\begin{itemize}
\item[(1)] $S_{k,t}\cap S_{m,N}\neq\varnothing$ if and only if $\|k-m\|_{\infty}
\leq t+N$, that is $t\geq\|k-m\|_{\infty}-N$, and
\item[(2)] $S_{k,t}\cap S_{m,N}=S_{m,N}$ when $\|k-m\|_{\infty}\leq t-N$, that is
when $t\geq\|k-m\|_{\infty}+N$.
\end{itemize}
It follows from (1) and (2) above that
\begin{align*}
\underset{t\in\w}{\sup}\frac{1}{(2t+1)^d}|S_{k,t}\cap S_{m,N}|&=\underset{\|k-m\|_\infty-N
\leq t\leq\|k-m\|_\infty+N}{\sup}\frac{1}{(2t+1)^d}|S_{k,t}\cap S_{m,N}|\\
&\leq\frac{(2N+1)^d}{\bigl(2(\|k-m\|_\infty-N)+1\bigr)^d}\\
&\leq\left(\frac{3}{2}\right)^d\frac{N^d}{(\|k-m\|_\infty-N)^d}.
\end{align*}
Also observe that for every $k\in\Z^d$ we have $M\chi_{S_{m,N}}(k)\leq M$\textbf{1}$(k)=1$,
where \textbf{1} denotes the constant function on $\Z^d$ taking value one. Hence
\begin{align*}
K&\sum_{k\in S_{m,2N}}|x(k)|^{p}M\chi_{S_{m,N}}(k)+K\sum_{j=1}^{\infty}
\sum_{k\in S_{m,2^{j+1}N}\setminus S_{m,2^{j}N}}|x(k)|^{p}M\chi_{S_{m,N}}(k)\\
&\leq K\sum_{k\in S_{m,2N}}|x(k)|^{p}+\left(\frac{3}{2}\right)^dK\sum_{j=1}^{\infty}\sum_{k\in S_{m,2^{j+1}N}
\setminus S_{m,2^{j}N}}|x(k)|^{p}\frac{N^d}{(\|k-m\|_{\infty}-N)^d}.
\end{align*}
Now if $k\in S_{m,2^{j+1}N}\setminus S_{m,2^jN}$ then $\|k-m\|_{\infty}-N>2^jN-N\geq 2^{j-1}N$. Thus
\begin{align*}
K&\sum_{k\in S_{m,2N}}|x(k)|^{p}+\left(\frac{3}{2}\right)^dK\sum_{j=1}^{\infty}\sum_{k\in S_{m,2^{j+1}N}
\setminus S_{m,2^{j}N}}|x(k)|^{p}\frac{N^d}{(\|k-m\|_\infty-N)^d}\\
&\leq K\sum_{k\in S_{m,2N}}|x(k)|^{p}+\left(\frac{3}{2}\right)^dK\sum_{j=1}^{\infty}\sum_{k\in S_{m,2^{j+1}N}}
|x(k)|^{p}\frac{N^d}{(2^{j-1}N)^d}\\
&=K\sum_{k\in S_{m,2N}}|x(k)|^{p}+\left(\frac{3}{2}\right)^dK\sum_{j=1}^{\infty}\frac{1}{(2^d)^{j-1}}
\sum_{k\in S_{m,2^{j+1}N}}|x(k)|^{p}.
\end{align*}
Next observe that for every $t\in\Z^d$ and all $n\in\w$ we have
\[
\sum_{k\in S_{t,n}}|x(k)|^p\leq\|x\|_{\ell^p_q(\Z^d)}^p(2n+1)^{d-\frac{dp}{q}}.
\]
Hence
\begin{align*}
K&\sum_{k\in S_{m,2N}}|x(k)|^{p}+\left(\frac{3}{2}\right)^dK\sum_{j=1}^{\infty}\frac{1}{(2^d)^{j-1}}
\sum_{k\in S_{m,2^{j+1}N}}|x(k)|^{p}\\
&\leq K\|x\|_{\ell^p_q(\Z^d)}^p(4N+1)^{d-\frac{dp}{q}}+\left(\frac{3}{2}\right)^dK\sum_{j=1}^{\infty}
\frac{1}{(2^d)^{j-1}}\|x\|_{\ell^p_q(\Z^d)}^p(2^{j+2}N+1)^{d-\frac{dp}{q}}\\
&\leq 2^{d-\frac{dp}{q}}K\|x\|_{\ell^p_q(\Z^d)}^p(2N+1)^{d-\frac{dp}{q}}+\left(\frac{3}{2}\right)^dK
\sum_{j=1}^{\infty}\frac{(2^{j+1})^{d-\frac{dp}{q}}}{(2^d)^{j-1}}
\|x\|_{\ell^p_q(\Z^d)}^p(2N+1)^{d-\frac{dp}{q}}\\
&= 2^{d-\frac{dp}{q}}K\|x\|_{\ell^p_q(\Z^d)}^p(2N+1)^{d-\frac{dp}{q}}+
3^d(2^{d-\frac{dp}{q}})\left(\frac{1}{1-2^{-\frac{dp}{q}}}-1\right)
K\|x\|_{\ell^p_q(\Z^d)}^p(2N+1)^{d-\frac{dp}{q}}\\
&\leq C\|x\|_{\ell^p_q(\Z^d)}^p(2N+1)^{d-\frac{dp}{q}},
\end{align*}
where $\frac{C}{2}=\left(2^{d-\frac{dp}{q}}K\right)\vee\left(3^d(2^{d-
\frac{dp}{q}})\left(\frac{1}{1-2^{-\frac{dp}{q}}}-1\right)K\right)$.
Thus for every $m\in\Z^d$ and all $N\in\N$,
\[
(2N+1)^{\frac{d}{q}-\frac{d}{p}}\biggl(\sum_{k\in S_{m,N}}\bigl(Mx(k)\bigr)^p
\biggr)^{\frac{1}{p}}\leq C^{1/p}\|x\|_{\ell^p_q(\Z^d)}.
\]
That this inequality holds for $N=0$ (with $C=1$) follows from Lemma~\ref{bddness-M}.
Therefore,
\[
\|Mx\|_{\ell^p_q(\Z^d)}\leq(C^{1/p}\vee 1)\|x\|_{\ell^p_q(\Z^d)},
\]
which completes the proof.
\end{proof}

As an application of Theorem~\ref{T:Misbdd}, we obtain the boundedness of
some Riesz potentials on discrete Morrey spaces.

\begin{theorem}
Let $0<\alpha<d$ and $1<p<q<\frac{d}{\alpha}$. Define
\begin{equation}\label{FIO}
I_{\alpha}x(k)=\sum_{i\in\Z^d\setminus\{k\}}\frac{x(i)}{\|k-i\|_\infty^{d-\alpha}}
\quad (x\in\ell_q^p(\Z^d),\ k\in\Z^d).
\end{equation}
Set $s=\frac{dp}{d-\alpha q}$ and $t=\frac{qs}{p}$. Then $I_\alpha x\in \ell^s_t(\Z^d)$
for every $x\in\ell^p_q(\Z^d)$, and there exists a $C>0$ such that
\[
\|I_{\alpha}x\|_{\ell_t^s(\Z^d)}\leq
C\|x\|_{\ell_q^p(\Z^d)}\quad (x\in\ell_q^p(\Z^d)).
\]
\end{theorem}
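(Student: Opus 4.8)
The plan is to adapt Hedberg's classical argument to the discrete setting: first establish a pointwise ``Hedberg inequality'' controlling $|I_\alpha x(k)|$ by a power of the maximal function $Mx(k)$ times a complementary power of $\|x\|_{\ell^p_q(\Z^d)}$, and then pass to the Morrey norm using the boundedness of $M$ provided by Theorem~\ref{T:Misbdd}.

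\textbf{Step 1 (Hedberg inequality).} Fix $k\in\Z^d$ and a positive integer $R$, and split the series defining $I_\alpha x(k)$ (bounded in absolute value, by the triangle inequality, by the sum of $|x(i)|\,\|k-i\|_\infty^{\alpha-d}$) into a \emph{near} part over $\{i:0<\|k-i\|_\infty\le R\}$ and a \emph{far} part over $\{i:\|k-i\|_\infty>R\}$. For the near part, group the indices into dyadic shells $\{2^\ell\le\|k-i\|_\infty<2^{\ell+1}\}$, bound $\|k-i\|_\infty^{\alpha-d}$ above by $2^{\ell(\alpha-d)}$ on each shell, and use $\sum_{i\in S_{k,2^{\ell+1}}}|x(i)|\le|S_{k,2^{\ell+1}}|\,Mx(k)$ together with $|S_{k,2^{\ell+1}}|\le 5^d 2^{\ell d}$; since $\alpha>0$, the resulting geometric series in $\ell$ is dominated by its largest-scale term and contributes $\lesssim R^\alpha Mx(k)$. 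For the far part, group the indices into dyadic shells $\{2^\ell R<\|k-i\|_\infty\le 2^{\ell+1}R\}$, again bound $\|k-i\|_\infty^{\alpha-d}$ above, and estimate $\sum_{i\in S_{k,2^{\ell+1}R}}|x(i)|$ via H\"older's inequality and the definition of the $\ell^p_q$-norm, obtaining a term $\lesssim(2^\ell R)^{\alpha-d/q}\|x\|_{\ell^p_q(\Z^d)}$; the exponent $\alpha-d/q$ is negative \emph{precisely because} $q<d/\alpha$, so this series converges and the far part is $\lesssim R^{\alpha-d/q}\|x\|_{\ell^p_q(\Z^d)}$. Combining,
\[
|I_\alpha x(k)|\le C_1\, R^\alpha Mx(k)+C_2\, R^{\alpha-d/q}\|x\|_{\ell^p_q(\Z^d)}.
\]
Choosing $R$ to be the power of two nearest $\bigl(\|x\|_{\ell^p_q(\Z^d)}/Mx(k)\bigr)^{q/d}$ — which is $\ge 1$ by Lemma~\ref{bddness-M} — balances the two terms, and recording that $p/s=(d-\alpha q)/d$ yields
\[
|I_\alpha x(k)|\le C\,\|x\|_{\ell^p_q(\Z^d)}^{1-p/s}\bigl(Mx(k)\bigr)^{p/s}\qquad(k\in\Z^d);
\]
in particular the series for $I_\alpha x(k)$ converges absolutely, since $Mx(k)\le\|x\|_{\ell^p_q(\Z^d)}<\infty$.

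\textbf{Step 2 (passage to the Morrey norm).} Raise the Hedberg inequality to the $s$-th power to get $|I_\alpha x(k)|^s\le C^s\|x\|_{\ell^p_q(\Z^d)}^{\,s-p}(Mx(k))^p$, then sum over $k\in S_{m,N}$ for arbitrary $m\in\Z^d$ and $N\in\w$ and invoke Theorem~\ref{T:Misbdd} in the form $\sum_{k\in S_{m,N}}(Mx(k))^p\le C^p|S_{m,N}|^{1-p/q}\|x\|_{\ell^p_q(\Z^d)}^p$, obtaining
\[
\sum_{k\in S_{m,N}}|I_\alpha x(k)|^s\le C\,|S_{m,N}|^{1-p/q}\,\|x\|_{\ell^p_q(\Z^d)}^s.
\]
Taking $s$-th roots and multiplying by $|S_{m,N}|^{1/t-1/s}$, the total exponent of $|S_{m,N}|$ is $\tfrac1t-\tfrac1s+\tfrac1s-\tfrac{p}{qs}=\tfrac1t-\tfrac{p}{qs}$, which vanishes by the choice $t=qs/p$. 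Hence $|S_{m,N}|^{1/t-1/s}\bigl(\sum_{k\in S_{m,N}}|I_\alpha x(k)|^s\bigr)^{1/s}\le C\|x\|_{\ell^p_q(\Z^d)}$ uniformly in $m$ and $N$, so $I_\alpha x\in\ell^s_t(\Z^d)$ with $\|I_\alpha x\|_{\ell^s_t(\Z^d)}\le C\|x\|_{\ell^p_q(\Z^d)}$.

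\textbf{Main obstacle.} The crux is Step 1: arranging the two dyadic decompositions so that the volume factor $|S_{k,\cdot}|\approx(\mathrm{scale})^d$ exactly offsets the decay $\|k-i\|_\infty^{\alpha-d}$, leaving genuinely geometric (hence summable) series, and tracking the exponents carefully enough to see that $q<d/\alpha$ is precisely the convergence threshold for the far part. A minor but genuine technical nuisance is that $R$ must be chosen among integers rather than reals, so the optimization is done over powers of two, losing only a constant factor. Once the Hedberg inequality is in hand, Step 2 is a routine computation with the algebraic identities relating $p,q,s,t$.
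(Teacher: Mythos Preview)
Your proposal is correct and follows essentially the same Hedberg-type argument as the paper: a near/far dyadic splitting yielding the pointwise bound $|I_\alpha x(k)|\lesssim R^\alpha Mx(k)+R^{\alpha-d/q}\|x\|_{\ell^p_q}$, optimization in $R$ via Lemma~\ref{bddness-M}, and then Theorem~\ref{T:Misbdd} to pass to the Morrey norm. The only cosmetic differences are that the paper runs the near-part shells inward from a real radius $r\ge 1$ (and therefore needs the auxiliary equivalence~(\ref{eq3-3}) to recover $Mx$), whereas you run them outward from scale $1$ and keep $R$ a power of two, which sidesteps that equivalence; neither change is substantive.
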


\begin{proof}
Let $x\in\ell^p_q(\Z^d)$, and let $m\in\Z^d$. Then
\begin{align*}
Mx(m)&=\underset{N\in\w}{\sup}\frac{1}{(2N+1)^d}\sum_{k\in S_{m,N}}|x(k)|\\
&\leq |x(m)|\vee\underset{N\in\N}{\sup}\frac{1}{(2N)^d}\sum_{k\in S_{m,N}}|x(k)|\\
&\leq 2^d\Biggl(\frac{1}{2^d}\sum_{k\in S_{m,1}}|x(m)|\vee\underset{N\in\N}{\sup}
\frac{1}{(2N)^d}\sum_{k\in S_{m,N}}|x(k)|\Biggr)\\
&\leq 2^d\,\underset{r\geq 1}{\sup}\frac{1}{(2r)^d}\sum_{k\in\Z^d,\|m-k\|_\infty\leq r}|x(k)|.
\end{align*}
On the other hand,
\begin{align*}
\underset{r\geq 1}{\sup}\frac{1}{(2r)^d}\sum_{k\in\Z^d,\|m-k\|_{\infty}\leq r}|x(k)|&
\leq\underset{r\geq 1}{\sup}\frac{1}{(2\lfloor r\rfloor)^d}\sum_{k\in\Z^d,
\|m-k\|_\infty\leq\lfloor r\rfloor}|x(k)|\\
&=\underset{N\in\N}{\sup}\frac{1}{(2N)^d}\sum_{k\in S_{m,N}}|x(k)|\\
&\leq\left(\frac{3}{2}\right)^d Mx(m).
\end{align*}
Thus
\begin{equation}\label{eq3-3}
\left(\frac{2}{3}\right)^d\underset{r\geq 1}{\sup}\frac{1}{(2r)^d}\sum_{k\in\Z^d,\|m-k\|_\infty\leq r}
|x(k)|\leq Mx(m)\leq 2^d\underset{r\geq 1}{\sup}\frac{1}{(2r)^d}\sum_{k\in\Z^d,\|m-k\|_\infty\leq r}|x(k)|.
\end{equation}
Next let $r\geq 1$, and put $k\in\Z^d$. Then
\[
I_{\alpha}x(k)=\sum_{i\in\Z^d\setminus\{k\}}\frac{x(i)}{\|k-i\|_\infty^{d-\alpha}}
=\sum_{0<\|k-i\|_\infty\leq r}\frac{x(i)}{\|k-i\|_\infty^{d-\alpha}}+
\sum_{\|k-i\|_\infty>r}\frac{x(i)}{\|k-i\|_\infty^{d-\alpha}}.
\]
Define
\[
I_1:=\sum_{0<\|k-i\|_\infty\leq r}\frac{x(i)}{\|k-i\|_\infty^{d-\alpha}}\quad \text{and}\quad
I_2:=\sum_{\|k-i\|_\infty>r}\frac{x(i)}{\|k-i\|_\infty^{d-\alpha}}.
\]
Then
\begin{align*}
|I_1|\leq\sum_{j=0}^{\infty}\sum_{r2^{-j-1}<\|k-i\|_\infty\leq r2^{-j}}\frac{|x(i)|}{\|k-i\|_\infty^{d-\alpha}}.
\end{align*}
If $\|k-i\|_\infty>r2^{-j-1}$ then $\|k-i\|_\infty^{\alpha-d}<
r^{\alpha-d}2^{-j\alpha+jd-\alpha+d}$. Thus
\begin{align*}
\sum_{j=0}^{\infty}\sum_{r2^{-j-1}<\|k-i\|_\infty\leq r2^{-j}}\frac{|x(i)|}{\|k-i\|_\infty^{d-\alpha}}
&<\sum_{j=0}^{\infty}\sum_{r2^{-j-1}<\|k-i\|_\infty\leq r2^{-j}}|x(i)|r^{\alpha-d}2^{-j\alpha+jd-\alpha+d}\\
&\leq r^{\alpha}2^{d-\alpha}\sum_{j=0}^{\infty}2^{-j\alpha}\frac{1}{(r2^{-j})^d}
\sum_{0<\|k-i\|_\infty\leq r2^{-j}}|x(i)|.
\end{align*}
Let $J=\max\{j\in\w:r2^{-j}\geq 1\}$. Since $\sum\limits_{0<\|k-i\|_\infty\leq r2^{-j}}|x(i)|$
is an empty sum for all $j>J$, we have
\begin{align*}
r^{\alpha}2^{d-\alpha}\sum_{j=0}^{\infty}2^{-j\alpha}\frac{1}{(r2^{-j})^d}
\sum_{0<\|k-i\|_\infty\leq r2^{-j}}|x(i)|&=r^{\alpha}2^{d-\alpha}\sum_{j=0}^{J}2^{-j\alpha}\frac{1}{(r2^{-j})^d}
\sum_{0<\|k-i\|_\infty\leq r2^{-j}}|x(i)|.
\end{align*}
Using (\ref{eq3-3}), there exists a constant $C_0$ (for brevity, we do not record the precise
value of this constant) for which
\[
r^{\alpha}2^{d-\alpha}\sum_{j=0}^{J}2^{-j\alpha}\frac{1}{(r2^{-j})^d}
\sum_{0<\|k-i\|_\infty\leq r2^{-j}}|x(i)|\leq C_0r^{\alpha}Mx(k).
\]
Next note that
\[
|I_2|\leq\sum_{j=0}^{\infty}\sum_{2^jr<\|k-i\|_\infty\leq 2^{j+1}r}\frac{|x(i)|}{\|k-i\|_\infty^{d-\alpha}}.
\]
If $\|k-i\|_\infty>2^jr$ then $\|k-i\|_\infty^{\alpha-d}<(2^jr)^{\alpha-d}$. Hence we obtain
\begin{align*}
\sum_{j=0}^{\infty}&\sum_{2^jr<\|k-i\|_\infty\leq 2^{j+1}r}\frac{|x(i)|}{\|k-i\|_\infty^{d-\alpha}}
<\sum_{j=0}^{\infty}(2^jr)^{\alpha-d}\sum_{\|k-i\|_\infty\leq 2^{j+1}r}|x(i)|\\
&\leq\sum_{j=0}^{\infty}(2^jr)^{\alpha-d+\frac{d}{p}-\frac{d}{q}}\Biggl(
\sum_{\|k-i\|_\infty\leq 2^{j+1}r}1\Biggr)^{\frac{1}{p'}}(2^jr)^{\frac{d}{q}-\frac{d}{p}}
\Biggl(\sum_{\|k-i\|_\infty\leq 2^{j+1}r}|x(i)|^p\Biggr)^{\frac{1}{p}},
\end{align*}
where we use H\"older's inequality in the inequality above with reference to the H\"older
conjugate $p'$ of $p$. Moreover, we have
\begin{align*}
\sum_{j=0}^{\infty}&(2^jr)^{\alpha-d+\frac{d}{p}-\frac{d}{q}}\Biggl(
\sum_{\|k-i\|_\infty\leq 2^{j+1}r}1\Biggr)^{\frac{1}{p'}}(2^jr)^{\frac{d}{q}-\frac{d}{p}}
\Biggl(\sum_{\|k-i\|_\infty\leq 2^{j+1}r}|x(i)|^p\Biggr)^{\frac{1}{p}}\\
&\leq\sum_{j=0}^{\infty}(2^jr)^{\alpha-d+{\frac{d}{p}-\frac{d}{q}}}\Bigl(2^{j+2}r+
1\Bigr)^{\frac{d}{p'}}(2^j\lfloor r\rfloor)^{\frac{d}{q}-\frac{d}{p}}\Biggl(\sum_{\|k-i\|_\infty
\leq 2^{j+1}\lfloor r\rfloor}|x(i)|^p\Biggr)^{\frac{1}{p}}\\
&\leq 8^{\frac{d}{p}-\frac{d}{q}}\sum_{j=0}^{\infty}(2^jr)^{\alpha-d+{\frac{d}{p}-\frac{d}{q}}}
\Bigl(2^{j+2}r+1\Bigr)^{\frac{d}{p'}}(2^{j+2}\lfloor r\rfloor+1)^{\frac{d}{q}-\frac{d}{p}}
\Biggl(\sum_{i\in S_{k,2^{j+1}\lfloor r\rfloor}}|x(i)|^p\Biggr)^{\frac{1}{p}}\\
&\leq 8^{\frac{d}{p}-\frac{d}{q}}\sum_{j=0}^{\infty}(2^jr)^{\alpha-d+{\frac{d}{p}-\frac{d}{q}}}
\Bigl(2^{j+2}r+1\Bigr)^{\frac{d}{p'}}\|x\|_{\ell_q^p(\Z^d)}.
\end{align*}
It is readily checked that there exist constants $C_1, C_2>0$ such that
\begin{align*}
8^{\frac{d}{p}-\frac{d}{q}}\sum_{j=0}^{\infty}(2^jr)^{\alpha-d+{\frac{d}{p}-\frac{d}{q}}}
\Bigl(2^{j+2}r+1\Bigr)^{\frac{d}{p'}}\|x\|_{\ell_q^p(\Z^d)}&\leq C_1\sum_{j=0}^{\infty}
(2^jr)^{\alpha-d+{\frac{d}{p}-\frac{d}{q}}+\frac{d}{p'}}\|x\|_{\ell_q^p(\Z^d)}\\
&=C_2\|x\|_{\ell_q^p(\Z^d)}r^{\alpha-\frac{d}{q}}.
\end{align*}
Thus for $C_3=C_0\vee C_2$,
\begin{equation}\label{eq:3.3}
|I_{\alpha}x(k)|\leq C_3\Bigl(r^\alpha Mx(k)+r^{\alpha-\frac{d}{q}}\|x\|_{\ell_q^p(\Z^d)}\Bigr).
\end{equation}
Suppose for the moment that $k\in\Z^d$ satisfies $Mx(k)\neq 0$. By Lemma~\ref{bddness-M} we can
take $r:=\Bigl(\frac{\|x\|_{\ell_q^p(\Z^d)}}{Mx(k)}
\Bigr)^{q/d}\geq 1$ in (\ref{eq:3.3}) above and obtain
\begin{align*}
|I_{\alpha}x(k)|&\leq C_3\left[\Biggl(\biggl(\frac{\|x\|_{\ell_q^p(\Z^d)}}{Mx(k)}
\biggr)^{q/d}\Biggr)^\alpha Mx(k)+\Biggl(\biggl(\frac{\|x\|_{\ell_q^p(\Z^d)}}{Mx(k)}
\biggr)^{q/d}\Biggr)^{\alpha-\frac{d}{q}}\|x\|_{\ell_q^p(\Z^d)}\right]\\
&=2C_3\bigl(Mx(k)\bigr)^{1-\frac{\alpha q}{d}}\|x\|_{\ell_q^p(\Z^d)}^{\frac{\alpha q}{d}}.
\end{align*}
On the other hand, if $Mx(k)=0$ then $x=0$, and so $I_\alpha x(k)=0$. Thus the inequality
\[
|I_{\alpha}x(k)|\leq 2C_3\bigl(Mx(k)\bigr)^{1-\frac{\alpha q}{d}}\|x\|_{\ell_q^p(\Z^d)}^{\frac{\alpha q}{d}}
\]
holds in this case as well. Hence
\begin{align*}
\|I_\alpha x\|_{\ell_t^s(\Z^d)}&=\underset{m\in\Z,N\in\w}{\sup}\biggl(
\frac{1}{(2N+1)^{d-\frac{ds}{t}}}\sum_{k\in S_{m,N}}|I_\alpha x(k)|^s\biggr)^{\frac{1}{s}}\\
&\leq\underset{m\in\Z,N\in\w}{\sup}\biggl(\frac{1}{(2N+1)^{d-\frac{ds}{t}}}
\sum_{k\in S_{m,N}}\Bigl|2C_3\bigl(Mx(k)\bigr)^{1-\frac{\alpha q}{d}}\|x\|_{\ell_q^p(\Z^d)}^{\alpha q/d}
\Bigr|^s\biggr)^{\frac{1}{s}}\\
&=2C_3\|x\|_{\ell_q^p(\Z^d)}^{\alpha q/d}\underset{m\in\Z,N\in\w}{\sup}\biggl(
\frac{1}{(2N+1)^{d-\frac{dp}{q}}}\sum_{k\in S_{m,N}}\bigl(Mx(k)\bigr)^p\biggr)^{\frac{p}{ps}}\\
&=2C_3\|x\|_{\ell_q^p(\Z^d)}^{\alpha q/d}\|Mx\|_{\ell_q^p(\Z^d)}^{\frac{p}{s}}.
\end{align*}
By Theorem~\ref{T:Misbdd}, there exists $C>0$ such that
\[
2C_3\|x\|_{\ell_q^p(\Z^d)}^{\alpha q/d}\|Mx\|_{\ell_q^p(\Z^d)}^{\frac{p}{s}}
\leq C\|x\|_{\ell_q^p(\Z^d)}^{\alpha q/d}\|x\|_{\ell_q^p(\Z^d)}^{p/s}
=C\|x\|_{\ell_q^p(\Z^d)},
\]
as desired.
\end{proof}

\begin{remark}
The operator defined in (\ref{FIO}) may be considered as the discrete fractional
integral operator (for the continuous version, see for instance \cite{EGN}).
The proof that we presented above uses an analogue of Hedberg's inequality, which
we obtain right after we have inequality (\ref{eq:3.3}). The boundedness of this
operator on the $\ell^p(\Z^d)$ spaces can be found in \cite{SW2}.
\end{remark}

\begin{Acknowledgement}
This research was partially supported by the Claude Leon Foundation and by the DST-NRF Centre of Excellence in Mathematical and Statistical Sciences (CoE-MaSS) (second author). Opinions expressed and conclusions arrived at are those of the authors and are not necessarily to be attributed to the CoE-MaSS.
\end{Acknowledgement}

\end{document}